\title[Regularity for flat solutions]{Asymptotics and regularity of flat solutions to fully nonlinear elliptic problems}
\author{Disson dos Prazeres}
\author{Eduardo V. Teixeira }
\address{Universidade Federal do Cear\'a, Departamento de Matem\'atica, Campus do Pici - Bloco 914 , Fortaleza, CE - Brazil 60.455-760.}
\address{Universidade Federal do Cear\'a, Departamento de Matem\'atica, Campus do Pici - Bloco 914 , Fortaleza, CE - Brazil 60.455-760.}
\email{teixeira@mat.ufc.br} 
\date{}
\newtheorem{theorem}{Theorem}[section]
\newtheorem{lemma}[theorem]{Lemma}
\newtheorem{corollary}[theorem]{Corollary}
\theoremstyle{definition}
\newtheorem{definition}[theorem]{Definition}
\theoremstyle{remark}
\newtheorem{remark}[theorem]{Remark}
\numberwithin{equation}{section}
\newcommand{\intav}[1]{\mathchoice {\mathop{\vrule width 6pt height 3 pt depth  -2.5pt
\kern -8pt \intop}\nolimits_{\kern -6pt#1}} {\mathop{\vrule width
5pt height 3  pt depth -2.6pt \kern -6pt \intop}\nolimits_{#1}}
{\mathop{\vrule width 5pt height 3 pt depth -2.6pt \kern -6pt
\intop}\nolimits_{#1}} {\mathop{\vrule width 5pt height 3 pt depth
-2.6pt \kern -6pt \intop}\nolimits_{#1}}}
\begin{document}

\begin{abstract}

In this work we establish local $C^{2,\alpha}$ regularity estimates for flat solutions to non-convex fully nonlinear elliptic equations provided the coefficients  and the source function are of class $C^{0,\alpha}$. For problems with merely continuous data, we prove that flat solutions are locally $C^{1,\text{Log-Lip}}$.
\medskip

\noindent \textbf{Keywords:} Smoothness properties of solutions, optimal estimates, fully nonlinear elliptic PDEs

\medskip

\noindent \textbf{AMS Subject Classifications:} 35B65.

\end{abstract}

\maketitle

\tableofcontents

\section{Introduction}

The goal of this paper is to obtain optimal estimates for flat solutions to a class of non-convex fully nonlinear elliptic equations of the form
\begin{equation}
 F(X,D^2u) =  \mathscr{G}(X, u, \nabla u). \label{1.1prima}
\end{equation}
Under continuous differentiability with respect to the matrix variable and appropriate continuity assumptions on the coefficients and on the source function, we present a Schauder type regularity result for flat solutions, namely for solutions with small enough norm, $|u|\ll 1$.

\par
\medskip

The nonlinear operator $F \colon B_{1}\times \text{Sym}(n)\rightarrow \mathbb{R}$ is assumed to be uniformly elliptic, namely, there exist constants $ 0<\lambda \leq \Lambda$ such that for any $M, P \in \text{Sym}(n)$, with $P\ge 0$ and all $X \in B_{1} \subset \mathbb{R}^n$ there holds
\begin{equation} \label{ellip}
\lambda\|P\|\leq F(X,M+P)-F(X,M)\leq\ \Lambda\|P\|.
\end{equation}
Under such condition it follows as a consequence of Krylov-Safonov Harnack inequality that solutions to the homogeneous, constant coefficient equation
\begin{equation} \label{ellip-hom}
    F(D^2h) = 0
\end{equation}
are locally of class $C^{1,\alpha}$, for some $0 < \alpha < 1$. Under appropriate hypotheses on $\mathscr{G} \colon B_1 \times \mathbb{R} \times \mathbb{R}^n \to \mathbb{R}$, the same conclusion is obtained, i.e., viscosity solutions are of class $C^{1,\alpha}$. Thus, insofar as the regularity theory for equation of the form \eqref{1.1prima} is concerned, one can regard the right hand side $\mathscr{G}(X, u, \nabla u)$ as an $\tilde{\alpha}-$H\"older continuous source, $f(X)$. Therefore, within this present work, we choose to look at the RHS $\mathscr{G}(X, u, \nabla u)$ simply as a source term $f(X)$, and equation \eqref{1.1prima} will be written as
\begin{equation}
 F(X,D^2u) = f(X). \label{1.1}
\end{equation}

\par
\medskip

Regularity theory for heterogeneous equations \eqref{1.1} has been a central target of research for the past three decades.  While a celebrated result due to Evans and Krylov assures that solutions to convex equations are classical, i.e., $C^{2,\alpha}$ for some $\alpha>0$, the problem of establishing continuity of the Hessian of solutions to general equations of the form \eqref{ellip-hom} challenged the community for over twenty years. The problem has been settled in the negative by Nadirashvili and Vladut, \cite{NV1, NV2}, who exhibit solutions to uniform elliptic equations whose Hessian blows-up.  

\par
\medskip

In view of the impossibility of a general existence theory for classical solutions to all fully nonlinear equations \eqref{ellip-hom}, it becomes a central topic of research the study of reasonable conditions on $F$ and on $u$ as to assure the Hessian of the solution is continuous. In such perspective the works \cite{CY} and  \cite{CC03}  on interior $C^{2,\alpha}$ estimates for a particular class of non-convex equations are highlights. Another work towards Hessian estimates of solutions to fully nonlinear elliptic equations is \cite{S}, where it is proven that if the operator is of class $C^2$ in all of its arguments, then small solutions are classical.

\par
\medskip

Inspired by problems of the form \eqref{1.1prima}, in the present work, we obtain regularity estimates for flat solutions to heterogeneous equation \eqref{1.1}, under continuity conditions on the media. We show that if $X \mapsto (F(X, \cdot), f(X))$ is $\alpha$-H\"older continuous, then flat solutions are locally $C^{2, \alpha}$. In the case $\alpha = 0$, namely when the coefficients and the source are known to be just continuous, we show that flat solutions are locally $C^{1,\text{Log-Lip}}$. 

\par
\medskip

The proofs of both results mentioned above, to be properly stated in Theorem \ref{teo1}
and Theorem \ref{thm LL} respectively,  are based on a combination of 
geometric tangential analysis and perturbation arguments inspired by compactness methods
in the theory of elliptic PDEs. 

\par
\medskip

We conclude this introduction explaining the heuristics of the geometric tangential analysis behind our proofs.  Given a fully nonlinear elliptic operator $F$, we look at the family of elliptic scalings 
$$
	F_\mu(M) := \dfrac{1}{\mu} F(\mu M), \quad \mu >0.
$$
This is a continuous family of operators preserving the ellipticity constants of the original equation. If $F$ is differentiable at the origin (recall, by normalization $F(0) = 0$), then indeed 
$$
	F_\mu (M) \to \partial_{M_{ij}}F(0)M_{ij}, \quad \text{as } \mu \to 0.
$$
In other words, the linear operator  $M \mapsto \partial_{M_{ij}}F(0)M_{ij}$ is the \textit{tangential equation} of $F_\mu$ as $\mu \to 0$. Now, if $u$ solves an equation involving the original operator $F$, then $u_\mu := \frac{1}{\mu} u$ is a solution to a related equation for $F_\mu$. However, if in addition it is known that the norm of $u$ is at most $\mu$, then it accounts into saying that $u_\mu$ is a normalized solution to the $\mu$-related equation, and hence we can access the universal regularity theory available for the (linear) tangential equation by compactness methods. In the sequel we transport such  good {\it limiting} estimates towards $u_\mu$, properly adjusted by the geometric tangential path used to access the {\it tangential} linear elliptic regularity theory. 

\par
\medskip

The paper is organized as follows. In Section \ref{sec hyp and thm} we state all the hypotheses, mathematical set-up and notions to be used throughout the whole paper. In that Section we also state properly the two main Theorems proven in the work.  In Section \ref{sec GTO} we rigorously develop the heuristics of the geometric tangential analysis explained in the previous paragraph.  The proof of $C^{2,\alpha}$ estimates, Theorem \ref{teo1}, will be delivered in Section
\ref{Sect. c2alpha}. Two applications of such a result will  be discussed in Section \ref{sec appl}. Theorem \ref{thm LL} will be proven in
Section \ref{Sect. Log-Lip}.

\section{Hypotheses and main results} \label{sec hyp and thm}

Let us start off by discussing the  hypotheses, set-up and main notations used
in this article. For $B_1$ we denote the open unit ball in the
Euclidean space $\mathbb{R}^n$. The space of  $n\times n$
symmetric matrices will be denoted by $\text{Sym}(n)$. By modulus
of continuity we mean an increasing function $\varpi \colon [0, +\infty) \to [0,+\infty) $,  with $\varpi(0)=0$. 

\par
\medskip

Hereafter we shall assume the following conditions on the operator $F \colon
B_1 \times \text{Sym}(n) \to \mathbb{R}$ and $f\colon B_1 \to
\mathbb{R}$:
\begin{enumerate}
    \item[(H1)] There exist constants $ 0<\lambda \leq \Lambda$ such that for any $M, P \in\text{Sym}(n) $, with $P\ge 0$ and all $X \in B_{1}$,  there holds
\begin{equation} \label{ellip-local}
\lambda\|P\|\leq F(X,M+P)-F(X,M)\leq\ \Lambda\|P\|.
\end{equation}
\item[(H2)] $F(X, M)$ is differentiable with respect to $M$ and for a modulus of continuity $\omega$, there holds
\begin{equation}\label{C1}
    \|D_{M}F(X,M_1)-D_{M}F(X,M_2)\| \leq\omega(\|M_1-M_2\|),
\end{equation}
for all $(X,M_i) \in B_1 \times \text{Sym}(n)$.
    \item[(H3)]  For another modulus of continuity $\tau$, there holds
\begin{eqnarray}
    |F(X,M)-F(Y,M)| &\leq & \tau(|X-Y|) \cdot \|M\|,    \label{cont coeff} \\
    |f(X) - f(Y)|         &\leq  & \tau(|X-Y|),                     \label{cont source}
\end{eqnarray}
\end{enumerate}
for all $X,Y\in B_{1}$  and  $M \in \text{Sym}(n)$. It will also be enforced hereafter in this paper the following normalization conditions:
\begin{equation} \label{normalization}
    F(0, 0_{n\times n}) =  f(0) = 0;
\end{equation}
though such hypothesis is not restrictive, as one can always reduce the problem as to verify that. 

\par
\medskip

Condition (H1) concerns the notion of uniform ellipticity. Under such a structural condition, the theory of viscosity solutions provides an appropriate notion for weak solutions to such equations.

\begin{definition} A continuous function $u \in C^0(B_1)$ is said to be a viscosity subsolution to \eqref{1.1} in $B_1$ if whenever one touches the graph of $u$ by above by a smooth function $\varphi$ at $X_0 \in B_1$, there holds
$$
    F(X_0, D^2\varphi(X_0)) \ge f(X_0).
$$
Similarly, $u$ is a viscosity supersolution to \eqref{1.1} if whenever one touches the graph of $u$ by below by a smooth function $\phi$ at $Y_0 \in B_1$, there holds
$$
    F(Y_0, D^2\phi(Y_0)) \le f(Y_0).
$$
We say $u$ is a viscosity solution to \eqref{1.1} if it is a subsolution and a supersolution of \eqref{1.1}.

\end{definition}

\par
\medskip

Condition (H2) fixes a modulus of
continuity $\omega$ to the derivative of $F$. The regularity
estimates proven in this paper depends upon $\omega$. Condition
(H3) sets the continuity of the media. When $\tau(t) \approx
t^\alpha$, $0 < \alpha < 1$, the coefficients and the source
function are said to be $\alpha$-H\"older continuous.  In such
scenario we prove that flat solutions are locally of class
$C^{2,\alpha}$ -- a sharp Schauder type of estimate for non-convex fully nonlinear equations.

\begin{theorem}[$C^{2,\alpha}$ regularity] \label{teo1} Let $u\in C^{0}\left( B_{1}\right) $ be a viscosity solution
to
$$
F(X,D^{2}u) = f(X)\text{ in }B_{1},
$$
where $F$ and $f$ satisfy (H1)--(H3) with $\tau(t) = C t^\alpha$
for some $0< \alpha < 1$. There
exist a $ \overline{\delta} > 0$, depending only upon $n,\lambda
,\Lambda ,\omega, \alpha$, and $\tau(1)$, such that if
$$
    \sup_{B_{1}}| u | \leq \overline{\delta}
$$
then $u\in C^{2, \alpha } (B_{ {1}/{2}} )$ and
$$
    \|u\|_{C^{2,\alpha}(B_{1/2})}  \le M \cdot \overline{\delta},
$$
where $M$ depends only upon $n,\lambda ,\Lambda ,\omega$, and  $(1-\alpha) $.
\end{theorem}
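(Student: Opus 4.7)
The plan is to carry out a geometric tangential analysis: since $u$ is flat, I will show it is $C^0$-close to solutions of the constant-coefficient homogeneous equation $F(0, D^2 h)=0$. Because the Hessians involved lie in the regime of genuine ellipticity (H1) and $F(0,\cdot)$ is $C^1$ with modulus $\omega$ by (H2), such tangential solutions are in turn governed at leading order by the linear equation $\operatorname{tr}(A_0\, D^2 h) = 0$ with $A_0 := D_M F(0,0)$, and thus enjoy $C^{2,\gamma}$ interior estimates for every $\gamma<1$. The key tool is an \emph{Approximation Lemma}: for every $\varepsilon>0$, there exists $\delta = \delta(\varepsilon, n, \lambda, \Lambda, \omega)>0$ such that if $\|v\|_{L^\infty(B_1)} \le 1$ solves $F(X, D^2 v) = f(X)$ in $B_1$ with $\tau(1), \|f\|_\infty \le \delta$ (and $F(0,0)=f(0)=0$), then there exists $h$ solving $F(0, D^2 h) = 0$ in $B_{7/8}$ with $\|v-h\|_{L^\infty(B_{7/8})} \le \varepsilon$. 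This follows by a standard contradiction and compactness scheme anchored on the Krylov--Safonov H\"older estimate. A Taylor expansion of the tangential solution then supplies a quadratic polynomial $Q$ with $F(0, D^2 Q)=0$ and universally bounded coefficients, satisfying $\|h-Q\|_{L^\infty(B_r)} \le C_0\, r^{2+\gamma}$ for any $\gamma\in(\beta,\alpha)$.

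Fix $\gamma\in(\beta,\alpha)$ and choose $r\in(0,1/2)$ and $\varepsilon>0$ so that $C_0 r^{\gamma-\beta} + \varepsilon r^{-(2+\beta)} \le 1$. The main induction produces quadratic polynomials $(P_k)_{k\ge 0}$, with $P_0\equiv 0$, such that $F(0, D^2 P_k) = 0$, $\|P_k\|_{C^2} \le K\overline\delta$, and
\[
 \|u - P_k\|_{L^\infty(B_{r^k})} \le \overline\delta\, r^{k(2+\beta)}.
\]
For the step $k \mapsto k+1$, I define $v_k(Y) := \overline\delta^{-1}\, r^{-k(2+\beta)}\, [u(r^k Y) - P_k(r^k Y)]$. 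A direct computation, using $F(0, D^2 P_k) = f(0) = 0$ together with (H3) and $\tau(t) = Ct^\alpha$, shows that $v_k$ satisfies $\widetilde F_k(Y, D^2 v_k) = \widetilde f_k(Y)$ with
\[
 \widetilde F_k(Y, M) := r^{-k\beta}\bigl[F\bigl(r^k Y,\, r^{k\beta} M + D^2 P_k\bigr) - F\bigl(r^k Y,\, D^2 P_k\bigr)\bigr],
\]
where $\widetilde F_k$ inherits (H0)--(H2) (with the same $\lambda, \Lambda, \omega$) and satisfies (H3) with the rescaled modulus $\widetilde\tau_k(t) \le C\, r^{k(\alpha-\beta)}\, t^\alpha$, while $\|\widetilde f_k\|_\infty \le C\, r^{k(\alpha-\beta)}$. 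Since $\beta<\alpha$, both smallness parameters remain below $\delta$ for every $k$, and the Approximation Lemma applied to $v_k$ yields a tangential quadratic $Q_k$ producing $P_{k+1}(Z) := P_k(Z) + \overline\delta\, r^{k(2+\beta)}\, Q_k(Z/r^k)$, closing the induction.

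The main technical hurdle is the preservation of the \emph{flat regime} across the iteration. One must check that the accumulated Hessians $\|D^2 P_k\| \le K\overline\delta \sum_{j\ge 0} r^{j\beta}$ stay inside $\mathcal{B}_{\delta_\star}$, so that (H1)--(H2) keep controlling $\widetilde F_k$ with the same constants; this is secured by the summability of the geometric series once $\overline\delta$ is chosen small. Equally important, the rescaled coefficient and source oscillations must remain below the threshold $\delta$ of the Approximation Lemma, and this is precisely the role played by the strict gap $\alpha - \beta > 0$, which manufactures the decay factor $r^{k(\alpha-\beta)}$. Once the induction closes, $(P_k)$ is Cauchy with the quantitative rate $\|D^2 P_{k+1} - D^2 P_k\| \le C \overline\delta\, r^{k\beta}$, converging to a limiting quadratic polynomial $P_\infty$ which provides the pointwise $C^{2,\beta}$ expansion of $u$ at the origin. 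Running the same scheme centered at every $x_0 \in B_{1/2}$ (after the harmless normalization subtracting $F(x_0, 0)$, $f(x_0)$ and an affine function from $u$) and interpolating between dyadic scales yields the full $C^{2,\beta}(B_{1/2})$ bound with the announced linear dependence $\|u\|_{C^{2,\beta}(B_{1/2})} \le M\overline\delta$.
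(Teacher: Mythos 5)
There is a genuine gap at the heart of your scheme: the regularity of the tangential profile. Your Approximation Lemma normalizes $\|v\|_{L^\infty(B_1)}\le 1$ and produces $h$ solving the \emph{fully nonlinear} constant-coefficient equation $F(0,D^2h)=0$ with $\|h\|_\infty\approx 1$. You then assert that such $h$ is ``governed at leading order by $\operatorname{tr}(A_0D^2h)=0$'' and therefore enjoys $C^{2,\gamma}$ estimates for every $\gamma<1$; this is the step that fails. A unit-sized solution of a general (non-convex) uniformly elliptic $F(0,D^2h)=0$ need not be $C^2$ at all (Nadirashvili--Vl\u{a}du\c{t}), and the linearization $F(0,M)\approx D_MF(0,0)\cdot M$ is only valid when $M$ is small, i.e.\ when $h$ itself is flat --- precisely the property your normalization $\|v\|\le 1$ destroys. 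Without $C^{2,\gamma}$ control on $h$, the Taylor polynomial $Q$ with $\|h-Q\|_{L^\infty(B_r)}\le C_0r^{2+\gamma}$ does not exist, and the induction cannot start. Note also that after your rescaling the operator $\widetilde F_k(0,\cdot)$ is only $\omega(\overline\delta\,\|\cdot\|)$-close to linear (with $\overline\delta$ fixed), so its exact solutions are again solutions of a genuinely nonlinear equation whose $C^{2,\gamma}$ regularity is not free.

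The paper's Key Lemma avoids this by never renormalizing away the flatness: one takes $u_k$ with $\|u_k\|_\infty\le\delta_k\to 0$, sets $v_k=\delta_k^{-1}u_k$, and tests the viscosity inequalities with $\delta_k\varphi$. Expanding $F_k(X_k,\delta_kD^2\varphi)$ via the fundamental theorem of calculus and dividing by $\delta_k$ shows the limit $u_0$ solves the \emph{linear} equation $D_MF_0(0,0)\,D^2u_0=0$ in the viscosity sense; smoothness and interior $C^3$ estimates for $u_0$ are then automatic, and the approximating quadratic is the second-order Taylor polynomial of $u_0$ corrected by $a_k|X|^2$ so that $F_k(0,D^2P_k)=0$ exactly. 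This passage to the linearized tangential equation is the essential new ingredient (in the spirit of Savin's small perturbation theorem), and it is exactly what your blind proposal replaces by an unjustified regularity claim. The remaining architecture of your argument (the geometric iteration, the role of the gap $\alpha-\beta>0$, keeping $D^2P_k$ inside $\mathcal{B}_{\delta_\star}$, the final rescaling to make $\tau(1)$ small) matches the paper and would go through once the approximation step is repaired along these lines.
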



\par
\medskip

If $f$ is merely continuous, then even for the classical Poisson equation
$$
    \Delta u = f(X),
$$
solutions may fail to be of class $C^2$. In connection to Theorem 5.1 in \cite{T1}, in this paper we show that flat solutions in continuous media are locally of class $C^{1, \text{Log-Lip}}$, which corresponds to the optimal regularity estimate under such weaker conditions.

\begin{theorem}[$C^{1, \text{Log-Lip}}$ estimates]
\label{thm LL}Let $u\in C^{0}(B_{1})$ be a viscosity solution to
$$
F(X,D^{2}u)=f(X)\mbox{ in }B_{1}.
$$
Assume (H1)--(H3). Then there exist a $\overline{\delta
}=\overline{\delta }(n,\lambda,\Lambda ,\omega ,\tau)$ such that
if
$$
\sup_{B_{1}}\|u\|\leq\overline{\delta},
$$
then $u\in C^{1, \mathrm{Log-Lip}}(B_{\frac{1}{2}})$ and
$$
    \left | u(X) - \left [ u(Y) + \nabla u(Y) \cdot (X-Y) \right ] \right | \le -M \overline{\delta } \cdot |X-Y|^2 \log (|X-Y|),
$$
for a constant $M$ that depends only upon  $n,\lambda ,\Lambda ,\omega$, and  $(1-\alpha)$.
\end{theorem}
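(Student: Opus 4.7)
The plan is to run an approximation--iteration scheme analogous to the one used for Theorem~\ref{teo1}, but to allow the Hessian component of the approximating quadratic polynomials to drift linearly across the iteration index; this drift is what produces the logarithmic correction to the quadratic error, yielding $C^{1,\text{Log-Lip}}$ regularity in place of $C^{2,\alpha^-}$. First, I would establish a compactness-based approximation lemma: for every $\eta>0$, there exists $\delta_0=\delta_0(\eta,n,\lambda,\Lambda,\omega)$ such that any viscosity solution $v$ of $G(X,D^2 v)=g(X)$ in $B_1$ with $\|v\|_\infty\le 1$, $|g|\le\delta_0$, and $\sup_{X,M}|G(X,M)-G(0,M)|/(1+\|M\|)\le\delta_0$ lies within $\eta$ in $L^\infty(B_{1/2})$ of a viscosity solution $h$ of the frozen equation $G(0,D^2 h)=0$. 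Crucially, because $\delta_\star=\infty$, the operator $G(0,\cdot)$ is uniformly elliptic and $C^1$ on the whole matrix space with derivative modulus $\omega$, so Savin's theorem yields $h\in C^{2,\alpha_0}_{\mathrm{loc}}$ with universal $\alpha_0=\alpha_0(n,\lambda,\Lambda,\omega)>0$ and the quantitative Taylor approximation $\sup_{B_r}|h-T_2 h|\le C_0\,r^{2+\alpha_0}\|h\|_\infty$.

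The iteration then builds quadratic polynomials $Q_k(X)=a_k+b_k\cdot X+\tfrac12 X^{T} C_k X$ with $F(0,C_k)=0$ satisfying
\[
\sup_{B_{\rho^k}}|u-Q_k|\le\overline\delta\,\rho^{2k},\qquad |a_{k+1}-a_k|+\rho^k|b_{k+1}-b_k|+\rho^{2k}\|C_{k+1}-C_k\|\le C_\star\overline\delta\,\rho^{2k}.
\]
In the inductive step I rescale $v_k(X):=[u(\rho^k X)-Q_k(\rho^k X)]/(\overline\delta\,\rho^{2k})$, so $\|v_k\|_{L^\infty(B_1)}\le 1$ and $v_k$ is a viscosity solution of $\tilde F_k(X,D^2 v_k)=\tilde f_k(X)$, with $\tilde F_k(X,M):=F(\rho^k X,M+C_k)$ (so $\tilde F_k(0,0)=F(0,C_k)=0$) and $\tilde f_k(X):=f(\rho^k X)/\overline\delta$. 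Applying the approximation lemma to $v_k$ and then the Savin Taylor estimate to the limit $h_k$, I choose $\rho$ small enough that $C_0\rho^{\alpha_0}\le 1/4$ and select $\eta\le\rho^2/2$; this produces a quadratic polynomial whose assembly with $Q_k$ yields $Q_{k+1}$ with the required bounds.

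From the displayed inequalities, $a_k\to u(0)$ and $b_k\to\nabla u(0)$ geometrically, while $\|C_k\|$ may drift linearly: $\|C_k\|\le C_\star\overline\delta\,k$. Setting $L_\infty(X):=u(0)+\nabla u(0)\cdot X$, on $B_{\rho^k}$ one has
\[
|u(X)-L_\infty(X)|\le|u-Q_k|+|Q_k-L_\infty|\le C\,\overline\delta\,(k+1)\rho^{2k},
\]
and since $k+1\lesssim|\log\rho^k|$ this is exactly the log-Lip bound at the origin; the same argument centered at any $Y\in B_{1/2}$ yields the general statement. The main obstacle is closing the inductive step uniformly in $k$: the oscillation of $\tilde F_k$ in $X$ scales like $(1+\|M+C_k\|)\tau(\rho^k)$, so linear growth of $\|C_k\|$ forces the approximation lemma to be applied with data of order $(1+C_\star\overline\delta\,k)\tau(\rho^k)$. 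Ensuring this stays below $\delta_0$ for every $k\ge 0$ is precisely the technical step that dictates the final smallness $\overline\delta=\overline\delta(n,\lambda,\Lambda,\omega,\tau)$ and is responsible for the logarithmic, rather than algebraic, loss in the estimate compared with Theorem~\ref{teo1}.
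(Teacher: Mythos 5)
Your overall architecture (an approximation lemma followed by an iteration in which the Hessian coefficient of the approximating polynomials is allowed to drift linearly in $k$, producing the factor $k\rho^{2k}\sim -|X|^2\log|X|$) matches the paper's. But the key approximation step, as you state it, does not work, and the failure is exactly the obstruction this paper is designed to circumvent. You normalize $v_k:=[u(\rho^kX)-Q_k(\rho^kX)]/(\overline\delta\rho^{2k})$ so that $\|v_k\|_{L^\infty(B_1)}\le 1$, and then claim that the limiting/approximating function $h$, which solves the \emph{frozen fully nonlinear} equation $\tilde F_k(0,D^2h)=0$ with $\|h\|_\infty\approx 1$, is $C^{2,\alpha_0}$ by Savin's theorem. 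Savin's result applies only to \emph{flat} solutions ($\|h\|_\infty\le\delta$ for a small universal $\delta$); for solutions of norm $1$ of a general nonconvex uniformly elliptic equation satisfying only (H0)--(H2), no interior $C^{2,\alpha_0}$ estimate is available --- the Nadirashvili--Vl\u{a}du\c{t} examples cited in the introduction rule it out, and (H2) is only $C^1$ regularity of $F$ in $M$, below what Savin requires in any case. So the quantitative Taylor bound $\sup_{B_r}|h-T_2h|\le C_0r^{2+\alpha_0}\|h\|_\infty$ that drives your choice of $\rho$ is unavailable, and the inductive step does not close.

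The paper avoids this by never destroying flatness: the rescaled functions $\tilde u$ in the induction satisfy $\|\tilde u\|_\infty\le\delta$ (not $\le 1$), and the division by $\delta_k$ happens \emph{inside} the compactness lemma, where it turns the equation into the \emph{linearized} tangential equation $D_MF_0(0,0)\,D^2u_0=0$. That equation is linear with constant coefficients, so its solutions are smooth and one has clean $C^3$ estimates $\sup_{B_r}|u_0-P|\le Cr^3$ --- this replaces the Savin step in your argument. To repair your proof you would have to keep the iterates $\delta$-flat and derive the second-order approximation from the linearization (as in Lemma \ref{Key lemma} with $\alpha=0$), rather than from a regularity theorem for the frozen nonlinear operator. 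Your closing remark about the oscillation of $\tilde F_k$ growing like $(1+\|C_k\|)\tau(\rho^k)$ is a legitimate technical point (the paper passes over it rather quickly when asserting the rescaled operator still has oscillation $\le\rho(\delta)$), but it is secondary to the gap above.
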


\section{Geometric tangential analysis} \label{sec GTO}

In this Section we provide a rigorous treatment of the heuristics involved in the geometric tangential analysis explained at the end of the Introduction. The next Lemmas are central for the proof of both Theorem \ref{teo1} and Theorem \ref{thm LL}.

\begin{lemma}\label{Key lemma0} Let $F \colon B_1 \times  \text{Sym}(n)\rightarrow \mathbb{R}$ satisfy conditions (H1) and (H2). Given $0  \le \gamma < 1$, there exists $\eta > 0$, depending only on $n,\lambda ,\Lambda, \omega$, and  $\gamma$,  such that if $u$ satisfies $|u|\le 1$ in $B_1$ and solves $\mu^{-1} F(X, \mu D^2u) = f(X)$ in $B_1$, for
$$
   0 < \mu \le \eta, \quad  \sup_{M \in \text{Sym}(n)} \frac{|F(X,M)-F(0,M)|}{\|M\|} \leq \eta \quad  \mbox{ and } \quad  \| f\|_{L^{\infty }(B_{1})}\leq \eta,
$$
then one can find a number $0< \sigma < 1$, depending only on $n, \lambda$ and $\Lambda$, and  a quadratic polynomial $P$ satisfying
$$
\mu^{-1} F(0,\mu D^{2}P)=0, \quad \text{ with } \quad \|P\|_{L^{\infty }(B_{1})} \le C(n, \lambda, \Lambda), 
$$
for a universal constant  $C(n, \lambda, \Lambda) > 0$, such that
$$
\sup_{B_{\sigma }}|u-P| \leq\sigma ^{2+\gamma}.
$$
\end{lemma}

\begin{proof} Let us suppose, for the purpose of contradiction, that the Lemma fails to hold. If so, there would exist a sequence of elliptic operators, $F_k(X,M)$,
satisfying  hypotheses (H1) and (H2), a sequence $0 < \mu_k = \text{o}(1)$  and sequences of functions 
$$
	u_{k}\in C(B_{1}) \mbox{ and }f_{k}\in L^{\infty }(B_{1}),
$$ 
all  linked through the equation
\begin{equation} \label{eq0 KL0}
\dfrac{1}{\mu_k} F_{k}(X, \mu_k D^{2}u_{k})=f_{k}(X)\mbox{ in }B_{1},
\end{equation}
in the viscosity sense, such that
\begin{equation}
  \|u_{k}\| _{\infty }\leq 1, \quad \mu_k \le \frac{1}{k}, \quad  \sup_{M \in \text{Sym}(n)} \frac{ |F_{k}(X,M)-F_{k}(0,M)|}{\|M\|} \leq \dfrac{1}{k} \quad \mbox{ and } \quad  \|f_{k}\| _{\infty }\leq \frac{1}{k}; \label{eq1 KL0}
\end{equation}
however for some $0< \sigma_{0} <1$
\begin{equation}
    \sup_{B_{\sigma_{0}}}|u_{k}-P|>\sigma_{0}^{2+\gamma} ,\label{eq01 KL00}
\end{equation}
that for all quadratic polynomials $P$ that satisfies
$$
    \dfrac{1}{\mu_k} F_k(0, \mu_k D^2P) = 0.
$$
Passing to a subsequence if necessary, we can assume $F_k (X, M) \to F_\infty (X, M)$ locally uniform in $\text{Sym}(n)$. From uniform $C^1$ estimate on $F_k$ and the coefficient oscillation hypothesis in \eqref{eq1 KL0}, we deduce
\begin{equation} \label{eq2 KL2}
\dfrac{1}{\mu_k} F_{k}(X, \mu_k M) \to D_MF_\infty(0,0)\cdot M,
\end{equation}
locally uniform in $\text{Sym}(n)$. Also, by $C^{1, \alpha}$ {\it a priori} estimates for equation \eqref{eq0 KL0}, up to a subsequence, $u_k \to u_\infty$ locally uniform in $B_1$. Thus, by stability of viscosity solutions, we conclude
\begin{equation} \label{eq0 KL3}
	D_MF_\infty(0,0)\cdot D^2u_\infty = 0, \quad \text{ in } B_1.
\end{equation}
As $u_\infty$ solves a linear, constant coefficient elliptic equation, $u_\infty$ is smooth. Define
$$
    P:=u_{\infty}(0)+ Du_{\infty}(0) \cdot X + \dfrac{1}{2} X .D^{2}u_{\infty}(0)X.
$$
Since $\|u_\infty\|\le 1$, it follows from $C^3$ estimates on $u_\infty$ that
$$
\sup_{B_{r}}|u_{\infty}-P|\leq  Cr^3   \label{1.2prima},
$$
for a constant $C$ that depends only upon dimension $n$ and ellipticity constants, $\lambda$ and $\Lambda$.  Thus, if we select
$$
    \sigma := \sqrt[1-\gamma]{\dfrac{1}{2C}},
$$
a choice that depends only on $n$,  $\lambda$, $\Lambda$ and $\gamma$, we readily have
$$
\sup_{B_{\sigma}}|u_{\infty}-P|\leq  \dfrac{1}{2} \sigma^{2+\gamma}   \label{eq4 KL0},
$$
Also, from equation \eqref{eq0 KL3}, we obtain
$$
    D_{M}F_{\infty}( 0,0) \cdot D^{2 }P = 0
$$
which implies that
$$
\vert \mu_k^{-1} F_{k}(0, \mu_k D^2P 	 \vert= \text{o}(1).
$$
Now, since $F_{k}$ is uniformly elliptic in $B_1 \times
\text{Sym}(n)$ and $F_{k}(0,0)=0$, it is possible to find a
sequence of real numbers $(a_{k})\subset\mathbb{R}$ with $|a_{k}|=
\text{o}(1)$, for which the quadratic polynomial
$$
    P_{k}: = P + a_{k}|X|^{2}
$$
do satisfy
$$
    \mu_k^{-1} F_{k}(0, \mu_k D^{2}P_{k})=0.
$$
Finally we have, for any point in $B_{\sigma}$ and $k$ large enough,
$$
\begin{aligned}
 \sup\limits_{B_{\sigma}} |u_{k}-P_{k}|& \leq |u_{k} - u_{\infty}| + | u_\infty - P| + |P - P_{k}| \\
&\leq  \dfrac{1}{5} \sigma^{2+\gamma} + \dfrac{1}{2} \sigma^{2+\gamma} 
+ |a_{k}| \sigma^{2} \\
& <  \sigma^{2+ \gamma},
\end{aligned}
$$
which contradicts \eqref{eq01 KL00}. Lemma \ref{Key lemma0} is proven.
\end{proof}

In the sequel, we transfer the geometric tangential access towards a smallness condition of the $L^\infty$ of the solution.

\begin{lemma}\label{Key lemma} Let $F$ satisfy (H1) and (H2) and $0\le \alpha < 1$ be  given. There exist small a positive constant $\delta>0$ depending on $n,\lambda ,\Lambda $,and $\alpha$, and a constant  $0< \sigma < 1$ depending only on $n,\lambda ,\Lambda$ and $(1-\alpha)$ such that if $u$ is a solution to (\ref{1.1}) and
$$
    \| u \|_{L^{\infty }(B_{1})}\leq \delta, \quad \sup_{M \in \text{Sym}(n)} \frac{|F(X,M)-F(0,M)|}{\|M\|} \leq \delta^{3/2}  \quad  \mbox{ and } \quad  \| f\|_{L^{\infty }(B_{1})}\leq \delta ^{3/2 },
$$
then one can find a quadratic polynomial $P$ satisfying
\begin{equation}
F(0,D^{2}P)=0, \quad \text{ with } \quad \|P\|_{L^{\infty }(B_{1})} \le \delta C(n, \lambda, \Lambda) \label{P-soluçaodoproblemahomogeneo}
\end{equation}
for a universal constant  $C(n, \lambda, \Lambda) > 0$, and
$$
\sup_{B_{\sigma }}|u-P| \leq \delta \cdot \sigma ^{2+\alpha}  
$$
\end{lemma}
\begin{proof} Define the normalized function $v = \delta^{-1}  u$. We immediate check that
$$
	 \delta^{-1} F(X,  \delta D^2v) = \dfrac{f(X)}{\delta}.
$$
If $\eta$ is the number from Lemma \ref{Key lemma0}, we choose $\delta =  \eta^2$ and the Lemma follows.
\end{proof}

\section{$C^{2,\alpha}$ estimates in $C^{0,\alpha}$ media} \label{Sect. c2alpha}

In this Section we show that if the coefficients and the source are $\alpha$-H\"older continuous, then flat solutions are locally of class $C^{2,\alpha}$, i.e.  That is, herein we assume
\begin{equation}\label{hyp Calpha media}
     \tau(t) \lesssim C t^\alpha,
\end{equation}
for some $0< \alpha < 1$ and $C>0$, where $\tau$ is the modulus of continuity of the coefficients and the source function appearing in \eqref{cont coeff} and \eqref{cont source}.  Under such condition, we aim to show that flat solutions are locally of class $C^{2,\alpha}$.  
\medskip

The idea of the proof is to employ Lemma \ref{Key lemma} in an inductive process as to establish the aimed $C^{2,\alpha}$ estimate for flat solutions under an appropriate  smallness regime for the oscillation of the coefficients and the source function.
\begin{lemma}
\label{teo1auxiliar} Let $F$, $f$ and $u$ be under the hypotheses
of Lemma \ref{Key lemma}. Then there
exists a $\delta =\delta(n,\lambda ,\Lambda ,\omega)>0$,
such that if
$$
\sup_{B_{1}}\vert u\vert \leq \delta \quad \text{ and } \quad  \tau(1) \le \delta^{3/2},
$$
then $u\in C^{2, \alpha}$ at the origin and
$$
    |u - (u(0) + \nabla u(0)\cdot X + \frac{1}{2} X^t D^2u(0) X) | \le C \cdot \delta |X|^{2+\alpha},
$$
where $C>0$ depends only upon $n,\lambda ,\Lambda ,\omega$ and $(1-\alpha)$.
\end{lemma}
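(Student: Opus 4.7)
The plan is a Caffarelli-type iteration of Lemma \ref{Key lemma} along the geometric scales $\sigma^k$, where $\sigma \in (0,1)$ is the contraction rate of that lemma at the exponent $\alpha$. I would construct inductively a sequence of quadratic polynomials
$$
P_k(X) = a_k + b_k \cdot X + \tfrac{1}{2} X^t C_k X, \qquad P_0 \equiv 0,
$$
satisfying, for every $k \ge 0$: (i) $F(0, C_k) = 0$; (ii) $\sup_{B_{\sigma^k}} |u - P_k| \le \delta\, \sigma^{k(2+\beta)}$; and (iii) $|a_{k+1}-a_k| + \sigma^k|b_{k+1}-b_k| + \sigma^{2k}\|C_{k+1}-C_k\| \le C\delta\, \sigma^{k(2+\beta)}$. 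The base case is immediate from $\|u\|_\infty \le \delta$ together with the normalization $F(0,0)=0$, and the $C^{2,\beta}$ conclusion will come from passing $P_k \to P_\infty$ and identifying the limiting coefficients with $u(0), \nabla u(0), D^2 u(0)$.

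For the inductive step I would rescale to the unit ball via
$$
w_k(X) := \sigma^{-k(2+\beta)}(u - P_k)(\sigma^k X),
$$
so by (ii) $\|w_k\|_{L^\infty(B_1)} \le \delta$, and $w_k$ solves $F_k(X, D^2 w_k) = f_k(X)$ with
$$
F_k(X,M) := \sigma^{-k\beta}\, F(\sigma^k X,\, \sigma^{k\beta} M + C_k), \qquad f_k(X) := \sigma^{-k\beta}\, f(\sigma^k X).
$$
Condition (i) yields $F_k(0,0)=0$, and (H1)--(H2) are inherited by $F_k$ once the telescoped bound (iii) is used to keep $\|C_k\| \lesssim \delta$ inside $\mathcal{B}_{\delta_\star}$. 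Plugging the hypothesis $\tau(t) \le \delta^{\alpha/\beta}\, t^{\alpha}$ into (H3) and a direct computation give
$$
\sup_{M \in \mathcal{B}_{\delta_\star}} \frac{|F_k(X,M) - F_k(0,M)|}{1+\|M\|} \ +\ \|f_k\|_{L^\infty(B_1)} \ \lesssim\ \delta^{\alpha/\beta}\,\sigma^{k(\alpha-\beta)}.
$$
Choosing $\theta := (\alpha-\beta)/(2\beta) > 0$ so that $1+\theta < \alpha/\beta$ makes the right-hand side $\le \delta^{1+\theta}$ once $\delta$ is small enough. Lemma \ref{Key lemma} then produces a quadratic $Q_k$ with $F_k(0, D^2 Q_k) = 0$, $\|Q_k\|_{L^\infty(B_1)} \le C$, and $|w_k - Q_k| \le \delta\sigma^{2+\alpha}$ on $B_\sigma$. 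Defining $P_{k+1}(X) := P_k(X) + \sigma^{k(2+\beta)} Q_k(\sigma^{-k}X)$ and using $\alpha > \beta$ (hence $\sigma^{2+\alpha} \le \sigma^{2+\beta}$) verifies (ii) at level $k+1$; pulling back the zero-set identity $F_k(0,D^2 Q_k)=0$ yields (i); and the sharper bound $\|Q_k\|_{L^\infty(B_\sigma)} \le 2\delta$ (coming from $\|w_k\|_\infty \le \delta$ and the approximation) controls every coefficient of $Q_k$ by $C\delta$ and delivers (iii).

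Once the sequence is constructed, (iii) is geometric, so $P_k \to P_\infty = a_\infty + b_\infty\cdot X + \tfrac{1}{2} X^t C_\infty X$ locally uniformly, with $|a_\infty|+|b_\infty|+\|C_\infty\| \lesssim \delta$. For $|X|\le \sigma$, choosing $k$ with $\sigma^{k+1} \le |X| < \sigma^k$ and combining (ii) with the telescoped tail of (iii) (each term dominated, after factoring $|X|^{2+\beta}$ and using $|X| \le \sigma^k$, by $C\delta\, \sigma^{(j-k)\beta}$) produces
$$
|u(X) - P_\infty(X)| \le C\delta\, |X|^{2+\beta}.
$$
This pointwise expansion forces $a_\infty = u(0)$, $b_\infty = \nabla u(0)$, $C_\infty = D^2 u(0)$, completing the proof.

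The main obstacle is the simultaneous bookkeeping at each inductive step: one must convert the H\"older hypothesis $\tau(1) \le \delta^{\alpha/\beta}$ into the Key Lemma's $\delta^{1+\theta}$-smallness uniformly in $k$, keep the running Hessian $C_k$ inside the ellipticity regime $\mathcal{B}_{\delta_\star}$ of (H1)--(H2), and control $Q_k$ sharply enough to propagate (iii). These three constraints together pin down $\theta=(\alpha-\beta)/(2\beta)$, force $\delta = \delta(n,\lambda,\Lambda,\omega,\beta) \lesssim \delta_\star$, and inject the factor $(1-\alpha)^{-1}$ into the final constant through the dependence of $\sigma$ in Lemma \ref{Key lemma}.
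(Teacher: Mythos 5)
Your proposal is correct and follows essentially the same route as the paper: an inductive application of Lemma \ref{Key lemma} along the scales $\sigma^k$, producing quadratic polynomials with $F(0,D^2P_k)=0$, geometric control of the coefficient increments, and a telescoping argument to identify the limiting polynomial with the second-order Taylor expansion of $u$ at the origin. The only differences are cosmetic bookkeeping choices — you normalize $w_k$ by $\sigma^{-k(2+\beta)}$ so the flatness level stays fixed at $\delta$ (forcing the multiplicative rescaling $F_k(X,M)=\sigma^{-k\beta}F(\sigma^kX,\sigma^{k\beta}M+C_k)$), whereas the paper divides by $\sigma^{2k}$ and lets the flatness level decay like $\delta\sigma^{k\beta}$ with $\theta=\alpha/\beta-1$; both verify the Key Lemma's hypotheses at every step.
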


\begin{proof}
The proof consists in iterating Lemma \ref{Key lemma} as to produce a sequence of quadratic polynomials
\begin{equation} \label{1.3prima}
    P_{k}=\frac{1}{2}X^{t}A_{k}X+b_{k}\cdot X +c_{k} \quad \text{ with } \quad F(0, D^2P_k) = 0,
\end{equation}
that approximates $u$ in a $C^{2,\alpha}$ fashion, i.e.,
\begin{equation}
\sup\limits_{B_{\sigma^k}} \vert u(X)-P_{k}(X)\vert \leq \delta\sigma ^{(2+\alpha)k}.  \label{1.3}
\end{equation}
Furthermore, we aim to control the oscillation of the coefficients of $P_k$ as
\begin{equation}
\left\{
\begin{array}{rrl}
|A_{k}-A_{k-1}| & \leq & C\delta \sigma^{\alpha(k-1)}\\
|b_{k}-b_{k-1}| & \leq & C\delta \sigma^{(1+\alpha)(k-1)} \\
|c_{k}-c_{k-1}| & \leq & C\delta \sigma^{(2+\alpha)(k-1)}
\end{array}\right.\label{2.13}
\end{equation}
where $C>0$ is universal and  $\sigma$ and $\delta$ are the parameters from Lemma \ref{Key lemma}. The proof of existence of polynomials $P_k$ verifying \eqref{1.3prima},  \eqref{1.3} and \eqref{2.13} will be delivered by induction. The case $k=1$ is precisely the statement of Lemma \ref{Key lemma}.  Suppose now we have verified the $k$th step of induction, i.e., by  there exists a quadratic polynomial $P_{k}$ satisfying \eqref{1.3prima},  \eqref{1.3} and \eqref{2.13}. We define
\begin{eqnarray}
    \tilde{u}(X)&:= & \dfrac{1}{\sigma ^{(2+\alpha) k}}(u(\sigma ^{k}X)-P_{k}(\sigma^{k}X)); \label{def utilde}\\
    \tilde{F}(X,M)&:= & \dfrac{1}{\sigma^{k \alpha}}F(\sigma ^{k}X, \sigma^{k \alpha} \cdot M+D^{2}P_{k}). \label{def Ftilde}
\end{eqnarray}
Notice that
$$
	\left | D_M \tilde{F}(X,M) - D_M  \tilde{F}(X,N) \right |  \le \omega(\sigma^{k \alpha} \|M-N\|) \le \omega( \|M-N\|),
$$
that is, $\tilde{F}$ fulfills (H2). It readily follows from (\ref{1.3}) that $\tilde{u}$ satisfies
$$
|\tilde{u}| _{L^{\infty }(B_{1})}\leq \delta.
$$
Moreover, $\tilde{u}$ solve
$$
\tilde{F}(X,D^{2}\tilde{u})=\dfrac{1}{\sigma^{k\alpha}}f(\sigma ^{k}X)=:\tilde{f}(X)
$$
in the viscosity sense. From $\tau$-continuity of $f$ and the coefficients of $F$,  together with the smallness condition $\tau(1) \le \delta^{3/2}$, we verify 
$$
	\|\tilde{f}\|_\infty \le \delta^{3/2},
$$
and likewise,
$$
    \sup_{M\in\text{Sym}(n)} \frac{|\tilde{F}(X,M)-\tilde{F}(0,M)|}{\|M\|}  \leq \delta^{3/2}.
$$
Applying Lemma \ref{Key lemma} to $\tilde{u}$ gives a quadratic polynomial $\tilde{P}$
satisfying $\tilde{F}(0,D^{2}\tilde{P})=0$
for which
$$
|\tilde{u}(X)-\tilde{P}(X)|\leq \delta \sigma ^{2+ \alpha},  \quad \text{ for } |X| \leq \sigma.
$$
The $(k+1)$th step of induction is verified if we define
$$
P_{k+1}(X) := P_{k}(X)+\sigma ^{(2+\alpha)k}\tilde{P}(\sigma ^{-k}X).
$$
To conclude the proof of current Lemma, notice  that  \eqref{2.13} implies that 
$$
	\{A_{k}\}\subset \text{Sym}(n), \quad \{b_{k}\}\subset\mathbb{R}^{n},  \quad \mbox{ and }  \quad \{c_{k}\}\subset\mathbb{R}
$$	
are Cauchy sequences. Let us label the limiting quadratic polynomial
$$
    P_{\infty }(X):=\frac{1}{2}X^{t}A_{\infty}X+b_{\infty}X+c_{\infty},
$$
where $A_{k}\rightarrow A_{\infty },b_{k}\rightarrow b_{\infty}$ and $c_{k}\rightarrow c_{\infty }$. It further follows from \eqref{2.13}
\begin{equation}\label{2.14}
    |P_{k}(X)-P_{\infty }(X)|\leq C\delta(\sigma ^{\alpha k}|X|^{2}+\sigma ^{(  1+\alpha)k}|X|+\sigma^{(2+\alpha)k}),
\end{equation}
whenever $|X| \le \sigma^k$. Finally, fixed $X\in B_\sigma,$ take $k \in \mathbb{N}$ such that $\sigma^{k+1}<|X|\leq \sigma ^{k}$ and conclude, by means of  \eqref{1.3} and \eqref{2.14}, that
$$
|u(X)-P_{\infty }(X)| \leq C_{1}\delta\sigma ^{\left( 2+\alpha \right) k}\leq \frac{C_{1}\delta}{\sigma ^{2+\alpha }}|X|^{2+\alpha},
$$
as desired.
\end{proof}

We conclude the proof of Theorem \ref{teo1} by verifying that if $\tau(t) = \tau(1) t^\alpha$,  the smallness condition of Lemma \ref{teo1auxiliar}, namely
$$
 \tau(1) \le \delta^{3/2},
$$
is not restrictive. In fact, if $u\in C^{0}(B_{1})$ is a viscosity solution to
\begin{equation} \label{eq-secstion shauder final}
    F(X,D^{2}u)=f(X)\mbox{ in }B_{1},
\end{equation}
the auxiliary function
$$
v(X):=\frac{u(\mu X)}{\mu ^{2}}
$$
solves
$$
    F_{\mu}(X,D^{2}v)=f_{\mu}(X),
$$
where
$$
    F_{\mu}(X,M):=F(\mu X,M) \quad  \mbox{ and } \quad f_{\mu}(X):=f(\mu X).
$$
Clearly the new operator $F_{\mu}$ satisfies the same assumptions (H1)--(H3) as $F$, with the same universal parameters $\lambda ,\Lambda $ and $\omega$. Note however that
$$
    \max \left \{ |f_{\mu}(X)-f_{\mu}(Y)|, \frac{|F_{\mu}(X,M)-F_{\mu}(Y,M)|}{\|M\|} \right \} \leq \tau(1) \mu^{\alpha}|X-Y|^{\alpha},
$$
for $M \in\text{Sym}(n)$. Thus if $\tau_\mu$ is the modulus of
continuity for $f_\mu$ and $F_\mu$,
$$
    \tau_\mu(1) = \tau(1) \mu^\alpha.
$$
Finally, we take
$$
    \mu :=\min\left\{ 1, \frac{\sqrt[2\alpha]{\delta^3}}{ \sqrt[\alpha]{\tau(1)} }  \right\},
$$
where $\delta$ is the universal number from Lemma \ref{Key lemma}. In conclusion, if $u$ solves \eqref{eq-secstion shauder final}
and satisfies the flatness condition
$$
     \|u\| _{L^{\infty }(B_{1})}\leq \overline{\delta }:=\delta \mu ^{2},
$$
then Lemma \ref{teo1auxiliar} applied to $v$ gives $C^{2,\alpha}$ estimates for $v$, which is transported to $u$ accordantly. \qed

\section{Applications}  \label{sec appl}

Probably an erudite way to comprehend Theorem \ref{teo1} is by saying that if $u$ solves a fully nonlinear elliptic equation with $C^\alpha$ coefficients and source, then if it is close enough to a $C^{2,\alpha}$ function, then indeed $u$ is $C^{2,\alpha}$. This is particularly meaningful in problems involving some {\it a priori} set data. 
 
In this intermediary Section, we  comment on two applications of Theorem \ref{teo1}. The first one concerns an improvement of regularity for classical solutions in H\"older continuous media.

\begin{corollary}[$C^2$ implies $C^{2,\alpha}$]\label{improve reg} Let $u \in C^2(B_1)$ be a classical, poitwise solution to  
$$
	F(X, D^2u) = f(X)
$$ 
where $F(X, \cdot) \in C^1(\text{Sym}(n))$ satisfy (H1)--(H2). Assume further that condition (H3) holds with $\tau(t) = C t^\alpha$ for some $0< \alpha < 1$. Then, $u \in C^{2,\alpha}(B_{1/2})$, and 
$$
    \|u\|_{C^{2,\alpha}(B_{1/2})}  \le C(n,\lambda ,\Lambda, \alpha, \omega, \tau(1), \|u\|_{C^2(B_1)}).
$$
\end{corollary}
\begin{proof}
    We shall proof that $u$ is $C^{2,\alpha}$ at the origin. To this end, define, for an $r>0$ to be chosen soon, $v \colon B_1 \to \mathbb{R}$, by
    $$
        v(X) :=\dfrac{1}{r^2}u(rX) - \left [ \dfrac{1}{r^2}u(0) + \dfrac{1}{r}\nabla u(0)\cdot X + \dfrac{1}{2} X^t D^2u(0) X\right ].
    $$
    We clearly have
    \begin{equation}\label{IR1}
        v(0) = |\nabla v(0)| = 0 \quad \text{and} \quad |D^2v(0)| \le \varsigma(r),
    \end{equation}
    where $\varsigma$ is the modulus of continuity for $D^2u$. Now,  we choose $0< r \ll 1$ so small that
    $$
        \varsigma(r) \le c_n \overline{\delta},
    $$
    where $c_n$ is a dimensional constant and $\overline{\delta}$ is the number appearing in Theorem \ref{teo1}. With such choice, $v$ is under the condition of Theorem \ref{teo1}, for $\tilde{F}(X,M) := F(r X, M  +  D^2u(0))$ and $\tilde{f}(X) = f(rX)$.
\end{proof}

\begin{remark} We remark that  in the proof of Corollary \ref{improve reg},  we can estimate the absolute value of $v$ using integral remainders of the Taylor expansion. Thus, the very same conclusion of that Corollary holds true if we start up only  with VMO condition on $D^2u$. It is also interesting to highlight that Corollary \ref{improve reg} implies that if $u$ is a viscosity solution in $B_1$ of a non-convex, fully nonlinear equation under hypotheses (H1)--(H3). Then if $u$ is $C^2$ at a point $p \in B_1$, then indeed $u$ is $C^{2,\alpha}$ in a neighborhood of $p$. 

\end{remark}
\par 

\medskip 

The second application we explore here regards a mild extension of a recent result due to Armstrong, Silvestre, and Smart \cite{ASS}, on partial regularity for solutions to uniform elliptic PDEs.

\begin{corollary}[Partial regularity] \label{partial reg} Let $u \in C^0(B_1)$ be a viscosity solution to  $F(D^2u) = f(X)$ where $F \in C^1(\text{Sym}(n))$ satisfy $c  \le D_{u_i u_j} F(M) \le c^{-1}$ for some constant $c>0$ and the source function $f$ is Lipschitz continuous. Then, $u \in C^{2,1^{-}}(B_1 \setminus \Sigma)$ for a closed set $\Sigma \subset B_1$, with Hausdorff dimension at most $(n-\epsilon)$ for an $\epsilon > 0$ universal.
\end{corollary}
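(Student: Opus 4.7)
The plan is to combine a partial regularity result in the spirit of Armstrong--Silvestre--Smart \cite{ASS} with the Schauder-type estimate for flat solutions from Theorem \ref{teo1}. The first step produces a small ``bad'' set $\Sigma$ outside of which $u$ admits a good quadratic expansion at every scale; the second step promotes this expansion to $C^{2,\beta}$ regularity for every $\beta < 1$ by applying Theorem \ref{teo1} to a suitable rescaling. Concretely, I would invoke the ASS partial regularity theorem, extended to the inhomogeneous equation $F(D^2 u) = f(X)$ with $f$ Lipschitz --- a routine extension obtained either by subtracting an auxiliary function absorbing the source or by repeating the ASS compactness/flatness scheme with the bounded source included in the perturbation. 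This yields a closed set $\Sigma \subset B_1$ of Hausdorff dimension at most $n-\epsilon$, for some universal $\epsilon = \epsilon(n,c) > 0$, such that for every $x_0 \in B_1 \setminus \Sigma$ there exists a quadratic polynomial $Q_{x_0}$ with
$$
	\sup_{B_r(x_0)} |u - Q_{x_0}| = o(r^2) \quad \text{as } r \downarrow 0,
$$
and necessarily $F(D^2 Q_{x_0}) = f(x_0)$.

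Next, I would fix $x_0 \in B_1 \setminus \Sigma$ and $\beta \in (0,1)$, and for $r > 0$ small set
$$
	v(y) := \dfrac{u(x_0 + r y) - Q_{x_0}(x_0 + r y)}{r^2}, \qquad y \in B_1.
$$
Then $\|v\|_{L^\infty(B_1)} = o(1)$ as $r \downarrow 0$, and a short computation shows that $v$ satisfies $\widetilde{F}(D^2 v) = \widehat{f}(y)$ in $B_1$, where $\widetilde{F}(M) := F(M + D^2 Q_{x_0}) - F(D^2 Q_{x_0})$ is uniformly elliptic and $C^1$ with $\widetilde{F}(0) = 0$, and $\widehat{f}(y) := f(x_0 + r y) - f(x_0)$ is Lipschitz of constant $Lr$. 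For any $\alpha \in (\beta, 1)$, the $C^{0,\alpha}$ seminorm of $\widehat{f}$ on $B_1$ is bounded by $2^{1-\alpha} Lr$, which can be made arbitrarily small by taking $r$ small. Choosing $r$ so small that both $\|v\|_{L^\infty(B_1)}$ and the $\alpha$-H\"older seminorm of $\widehat f$ fall below the flatness threshold $\overline{\delta}$ of Theorem \ref{teo1}, I conclude that $v \in C^{2,\beta}(B_{1/2})$. Unrescaling gives $u \in C^{2,\beta}$ in a neighborhood of $x_0$; since $\beta < 1$ and $x_0 \in B_1 \setminus \Sigma$ were arbitrary, $u \in C^{2, 1^{-}}(B_1 \setminus \Sigma)$.

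The substantive content lies in the first step: producing the exceptional set $\Sigma$ via partial regularity for the inhomogeneous equation. The ASS argument hinges on $W^{2,\epsilon}$-type estimates and a Vitali covering adapted to the nonlinearity, and its extension to a non-vanishing Lipschitz right-hand side requires care in tracking how the source enters the iterative compactness scheme. Once this is secured, the promotion in the second step is a direct application of Theorem \ref{teo1}, the rescaling being precisely designed to realize the flatness hypothesis at each good point.
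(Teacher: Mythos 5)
Your proposal is correct and follows essentially the same route as the paper: defer the measure-theoretic content (the $W^{2,\epsilon}$-type estimates, suitably extended to a Lipschitz right-hand side, plus the Vitali covering) to the Armstrong--Silvestre--Smart scheme, and at each good point rescale and invoke Theorem \ref{teo1} in place of Savin's small-perturbation theorem to upgrade the regularity to $C^{2,1^{-}}$. The only nuance is your characterization of good points by an $o(r^2)$ quadratic expansion: what the ASS machinery (and the paper) actually delivers at a good point is touching by a quadratic with cubic error $\tfrac{1}{6}r^{-1}\overline{\delta}\,|Z-X|^{3}$ at a single scale $r$, i.e.\ flatness of fixed size $\sim\overline{\delta}$ rather than $o(1)$ --- but that is exactly the flatness threshold Theorem \ref{teo1} requires, so your second step goes through unchanged with this weaker input.
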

\begin{proof}
The proof is obtained by similar the reasoning employed in \cite{ASS}. Indeed, the same conclusion of Lemma 5.2 from
\cite{ASS} follows by noticing that  if $f\in C^{0,1}$, then
$$
\mathcal{M}_{\lambda ,\Lambda }^{-}(D^{2}(u_{e}))\leq C \quad  \mbox{ and } \quad \mathcal{M}_{\lambda ,\Lambda }^{+}(D^{2}(u_{e}))\geq -C
$$
where
$$
\mathcal{M}_{\lambda ,\Lambda }^{-}(M):=\inf_{\lambda I_{n}\leq A\leq \Lambda I_{n}}tr(AM), \quad \mathcal{M}_{\lambda ,\Lambda }^{+}(M):=\sup_{\lambda I_{n}\leq A\leq \Lambda I_{n}}tr(AM)
$$
are the Pucci extremal operators. Lemma 7.8 of \cite{CC} can still be employed.  The very same conclusion of Lemma 5.3 from \cite{ASS} also holds true for equations with   Lipschitz sources. Indeed, using the same notation from that Lemma, if  $Y\in B_{\frac{1}{2}}$ is such that there exist $M\in \text{Sym}(n)$, $p\in \mathbb{R}^{n}$ and $Z\in B(Y,r)$  such that
$$
    |u(X)-u(Z)+p.(Z-X)+(Z-X).M(Z-X)|\leq \frac{1}{6}r^{-1}\overline{\delta }|Z-X|^{3}, \quad   X\in B_1,
$$
we define
$$
    v(X)=\frac{1}{16r^{2}}(u(Z+4rX)-u(Z)+4rp.X+16r^{2}X.MX)
$$
and
$$
\widetilde{F}(N)=F(N-M)-F(-M).
$$
Notice that
$$
    \widetilde{F}(X,D^{2}v)=f(Z+4rX)-F(-M)=\widetilde{f}(X)\in C^{0,1}.
$$
Thus, applying Theorem \ref{teo1} to $v$ gives $u$ is $C^{2,1^{-}}$ in $B(Y,r)$. The proof of Theorem \ref{partial reg} follows now exactly as in \cite{ASS}.
\end{proof}

\section{Log-Lipschitz estimates in continuous media} \label{Sect. Log-Lip}

In this Section we proof Theorem \ref{thm LL}. Initially we show that under continuity assumption on the coefficients of $F$ and on the source $f$, after a proper scaling, solutions are under the smallness regime requested by Lemma \ref{Key lemma}, with $\alpha = 0$. For that define
$$
    v(X)=\frac{u(\mu X)}{\mu ^{2}}, \quad F_{\mu}(X,M):=F(\mu X,M) \quad  \mbox{ and } \quad f_{\mu}(X):=f(\mu X),
$$
for a parameter $\mu $ to be determined. Equation
$$
    F_{\mu}(X,D^{2}v)=f_{\mu}(X),
$$
is satisfied in the viscosity sense. Now we choose $\mu$ so small  that
$$
     \tau (\mu)\leq \delta^{3/2},
$$
where $\tau$ is the modulus of continuity of the media and $\delta > 0$ is the number sponsored by Lemma \ref{Key lemma} with $\alpha =0$. 
In the sequel, define
$$
    \tau _{{\mu }}(t):=\tau(\mu t)
$$
and note that
$$
\max \left \{ |f_{\mu}(X |, \frac{|F_{\mu}(X,M)-F_{\mu}(0,M)|}{\|M\|} \right \} \leq \tau_{{\mu}}(|X-Y|).
$$
Thus,
$$
\sup_{M\in  \text{Sym}(n)}  \frac{|F_{\mu}(X,M)-F_{\mu }(0,M)|}{\|M\|} \leq \delta^{3/2} \quad \mbox{ and }\quad  \|f_{\mu}\|_{L^{\infty }(B_{1})}\leq  \delta^{3/2}.
$$
Now if we take
$$
\|u\|_{L^{\infty}(B_{1})}\leq \overline{\delta}:= \delta \mu^{2}
$$
then
$$
\|v\|_{L^{\infty}(B_{1})}\leq \delta,
$$
Estimates proven for $v$ gives the desired ones for $u$. 
\par
\medskip

The conclusion of the above reasoning is that we can start off the proof of Theorem \ref{thm LL} out from Lemma \ref{Key lemma}. That is, the proof of the current Theorem begins with the existence  of a quadratic polynomial $P_1$ satisfying
$F(0,D^{2}P_1)=0$  and a number $\sigma>0$ for which the following estimate
\begin{equation}\label{LL-inductionk=1}
    \sup_{B_{\sigma }}|u-P_1| \leq\sigma ^{2}\delta,
\end{equation}
holds, provided $\delta$ is small enough, depending only on universal parameters. As in Lemma \ref{teo1auxiliar}, we shall prove by induction process the existence of a sequence of polynomials 
$$
	P_{k}(X)=\frac{1}{2}X^{t}A_{k}X+b_{k}X+c_{k}
$$ 
satisfying $F(0,D^{2}P_{k})=0$ such that 
\begin{equation}
|u(X)-P_{k}(X)|\leq \delta\sigma ^{2k} \quad \mbox{ for }|X|\leq \sigma ^{k}.\label{l-l ref 1}
\end{equation}
Moreover, we have the following estimates on the coefficients
\begin{equation}
\left\{
\begin{array}{rrl}
|A_{k}-A_{k-1}| & \leq & C\delta \\  \label{l-l ref 2}
|b_{k}-b_{k-1}| & \leq & C\delta \sigma^{(k-1)} \\
|c_{k}-c_{k-1}| & \leq & C\delta \sigma^{2(k-1)}.
\end{array}\right.
\end{equation}
The case $k=1$ is precisely the conclusion enclosed in \eqref{LL-inductionk=1}. Assume we have verified the $k$th step of induction. Define the scaled function and the scaled operator
$$
    \tilde{u}(X):=\dfrac{1}{\sigma ^{2k}}(u(\sigma ^{k}X)-P_{k}(\sigma^{k}X)) \quad  \mbox{ and } \quad \tilde{F}(X,M):=F(\sigma ^{k}X,M+D^{2}P_{k}).
$$
Easily one verifies that $\tilde{u}$ is a viscosity solution to
$$
\tilde{F}(X,D^{2}\tilde{u})=f(\sigma ^{k}X):=\tilde{f}(X).
$$
From the induction hypothesis, (\ref{l-l ref 1}),  $\tilde{u}$ is flat, i.e.,  $|\tilde{u}|_{L^{\infty }(B_{1})}\leq \delta$. Also, clearly
$$
 \sup_{M \in \text{Sym}(n)} \frac{|\tilde{F}(X,M)-\tilde{F}(0,M)|}{\|M\|} \leq \delta^{3/2} \quad \mbox{ and } \quad \|\tilde{f}\|_{L^{\infty}(B_{1})}\leq  \delta^{3/2}.
$$
That is, $\tilde{u}$ is entitled to the conclusion \eqref{LL-inductionk=1}, thus there exists a quadratic polynomial $\tilde{P}$ with $\tilde{F}(0,D^{2}\tilde{P})=0$
and
$$
    |\tilde{u}(X)-\tilde{P}(X)|\leq \delta \sigma ^{2k} \quad \mbox{ for }|X|\leq\sigma.
$$
The $(k+1)$th step of induction follows by defining
$$
    P_{k+1}(X):= P_{k}(X)+\sigma^{2k}\tilde{P}(\sigma ^{-k}X).
$$
In view of the coefficient oscillation control \eqref{l-l ref 2}, we conclude $b_k$ converges in $\mathbb{R}^n$ to a vector $b_\infty$ and $c_k$ converges in $\mathbb{R}$ to a real number $c_\infty$. Also
\begin{eqnarray}
    |c_{k} -  c_{\infty}| &\le&  C\delta \sigma^{2k}, \label{LL-est001} \\
    |b_{k} - b_{\infty }|  &\le&  C\delta \sigma^{k}.   \label{LL-est002}
\end{eqnarray}
The sequence of matrices $A_k$ may diverge, however, we can at least estimate
\begin{equation}\label{BMO on D2}
    \|A_{k}\|_{\text{Sym(n)}} \leq kC\delta.
\end{equation}
In the sequel, we define the tangential affine function
$$
\ell_{\infty }(X): =c_{\infty }+b_{\infty }\cdot X
$$
and estimate, in view of \eqref{LL-est001}, \eqref{LL-est002} and \eqref{BMO on D2}, for $|X| \le \sigma^k$,
\begin{equation} \label{LL-final0}
\begin{aligned}
|u(X)-\ell_{\infty }(X)| &\leq|u(X)-P_{k}(X)|+ |c_{k}-c_{\infty }| +|(b_{k}-b_{\infty })||X|+|A_{k}||X|^{2} \\
&\leq\delta \sigma ^{2k}+2C\delta \sigma ^{2k}+kC\delta \sigma ^{2k} \\
& \le  C\delta (k \sigma^{2k}).
\end{aligned}
\end{equation}
Finally, fixed $X\in B_\sigma$, take $k \in \mathbb{N}$ such that $\sigma^{k+1}<|X|\leq \sigma ^{k}$. From  \eqref{LL-final0}, we find
$$
|u(X)-\ell_{\infty }(X)| \leq - ({C_{1}\delta} ) \cdot |X|^{2}\log{|X|},
$$
as desired. The proof of Theorem \ref{thm LL} is concluded. \qed

\end{document}